\xpatchcmd{\step}{%
	\normalfont\scshape\centering}{%
	\normalfont\scshape}{\typeout{Success}}{\typeout{Failure}}%
\providecommand{\U}[1]{\protect\rule{.1in}{.1in}}
\numberwithin{equation}{section}
\def\cX{\mathcal{X}}
\def\cal C{\mathcal{C}}
\def\inf{\rm inf}
\theoremstyle{plain}
\newtheorem{theo}{Theorem}[section]
\newtheorem{lemma}[theo]{Lemma}
\newtheorem{main theorem}[theo]{Main theorem}
\newtheorem{prop}[theo]{Proposition}
\newtheorem{cor}[theo]{Corollary}
\newtheorem{claim}[theo]{Claim}
\newtheorem{conj}[theo]{Conjecture}
\theoremstyle{definition}
\newtheorem{defn}[theo]{Definition}
\newtheorem{exam}[theo]{Example}
\newtheorem{step}[]{Step}
\newcommand{\comment}[1]{}
\begin{document}

\title{Rigidity for compact hyperbolic complex manifolds}

\author{Mu-lin Li}
\address{School of Mathematics, Hunan University, China}
\email{mulin@hnu.edu.cn}

\author{Sheng Rao}
\address{School of Mathematics and statistics, Wuhan  University, Wuhan 430072, China}
\email{likeanyone@whu.edu.cn}

\author{Mengjiao Wang}
\address{School of Mathematics and statistics, Wuhan  University, Wuhan 430072, China}
\email{mjwang96@whu.edu.cn}
\thanks{The authors are are partially supported by NSFC (Grant No. 12271412, W2441003) and Hubei Provincial Innovation Research Group Project (Grant No. 2025AFA044).}
\date{\today}
\subjclass[2020]{Primary 32Q45; Secondary 32G05,  14D22, 53C24, 32G13}
\keywords{Hyperbolic and Kobayashi hyperbolic manifolds; Deformations of complex structures, Fine and coarse moduli spaces, Rigidity results, Complex-analytic moduli problems}

\begin{abstract}
We study the deformation behavior of compact hyperbolic complex manifolds. Let $\pi:\mathcal{X}\rightarrow \Delta$ be a smooth family of compact complex manifolds over the unit disk in $\mathbb{C}$, and $H$ a compact hyperbolic complex manifold. Then the $H$-locus 
$\{t\in\Delta: X_t\cong H\}$
is either at most a discrete subset of $\Delta$ or the whole $\Delta$. For a smooth family over a compact Riemann surface $Y$, its $H$-locus 
is either at most finite or the whole $Y$. Furthermore, if $Y$ is isomorphic to $\mathbb{P}^1$ or an elliptic curve, then we conjecture that the $H$-locus is empty or the whole $Y$.  
\end{abstract}

\maketitle

\section{introduction}
A \emph{smooth family} $\pi:\mathcal{X}\rightarrow\Delta$ of compact complex manifolds is defined as a proper holomorphic submersion, such that all fibers $X_{t}:=\pi^{-1}(t)$ for $t\in\Delta$ are compact complex manifolds. The deformations of Kobayashi hyperbolic manifolds have been studied in many papers, such as \cite{Ko73,gf75,Ka76,Wr77,Br78,Za88}. Throughout this paper, the term ``hyperbolic'' specifically refers to ``Kobayashi hyperbolic''.

Two fundamental questions arise concerning the deformation of such manifolds:
\begin{enumerate}
    \item Is the subset of {hyperbolic complex  structures}  \textit{closed} in the full space of all complex structures?
    \item Is the subset of {hyperbolic complex  structures} \textit{open} in the full space of all complex structures?
\end{enumerate}
The answer to the first question is negative in general, while the second is affirmative.
In the following, we outline the historical developments and key results regarding the deformations of hyperbolic complex structures.

R. Brody--M. Green \cite{BG77} constructed a smooth family $\{X_t\}$ of high-degree hypersurfaces in $\mathbb{P}^3$ parameterized by a disk $\Delta$, where all fibers $X_t$ for $t \neq 0$ are hyperbolic, while the central fiber $X_0$ is not hyperbolic. Subsequently, M. Wright {\cite[COROLLARY 4.1]{Wr77}} established that for any smooth family $\pi: \mathcal{X} \to \Delta$ of compact complex manifolds, if $X_{t_i} \cong H$ for some fixed hyperbolic manifold $H$ and a sequence $\{t_i\}_{i=1}^{+\infty} \subseteq \Delta^* := \Delta \setminus \{0\}$ converging to $0$, then the central fiber $X_0$ must also be hyperbolic. In most places of this paper, ``$\cong$'' denotes the biholomorphism between two compact complex manifolds.
These results collectively resolve the first question regarding the closedness of the coarse moduli space in the analytic topology.

The following address the second one. Brody \cite[THEOREM 3.1]{Br78} proved that the points in $\Delta$ corresponding to hyperbolic complex structures in compact complex fibers $X_{t}$ form an open Euclidean set of $\Delta$.  Furthermore, S. Kobayashi \cite[(3.11.1) Theorem]{Ko98} established that for a complex fiber space $(\mathcal{X},f,B)$, if there is a point $b_{0}\in B$ such that the fiber $X_{b_0}$ is hyperbolic, then there is an open Euclidean neighborhood $U_{b_0}$ of $b_0$ such that $X_{b}$ is hyperbolic for any $b\in U_{b_0}$.
Recall that a \emph{complex fiber space} consists of complex spaces $\mathcal{X},\ B$ and a surjective holomorphic map $f:\mathcal{X}\rightarrow B$ with compact fibers.

Based on these works and moduli theory of hyperbolic structures, and much inspired by the uncountability argument in \cite{RT21,RT22}, we prove a smooth deformation rigidity result of hyperbolic structures:
\begin{theo}\label{main theorem}
Let $\pi: \cX\rightarrow\Delta$ be a smooth family of compact complex manifolds, and $H$ a compact hyperbolic complex manifold. Then the $H$-locus 
$$\{t\in\Delta: X_t\cong H\}$$
is either at most a discrete subset of $\Delta$ or the whole $\Delta$.
\end{theo}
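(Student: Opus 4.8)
The plan is to prove the statement by combining a ``limit'' step (Wright's closedness), an ``open'' step (Brody), and the moduli theory of hyperbolic manifolds, and then to finish with a connectedness argument on the disk. Write $Z=\{t\in\Delta: X_t\cong H\}$ for the $H$-locus. If $Z$ has no accumulation point in $\Delta$ it is discrete and we are done, so assume $Z$ has an accumulation point $t_0\in\Delta$; after an automorphism of $\Delta$ I may take $t_0=0$, so there is a sequence $t_i\in Z$ with $t_i\to 0$ and $t_i\neq 0$. Since each $X_{t_i}\cong H$ is hyperbolic and $t_i\to 0$, Wright's result \cite[Corollary 4.1]{Wr77} shows that $X_0$ is hyperbolic; then Brody's openness \cite[Theorem 3.1]{Br78} (equivalently \cite[(3.11.1) Theorem]{Ko98}) provides a sub-disk $\Delta_\varepsilon\subseteq\Delta$ centered at $0$ all of whose fibers are hyperbolic. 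Because a compact hyperbolic manifold has finite automorphism group, $H^0(X_0,T_{X_0})=0$, and by upper semicontinuity of $t\mapsto h^0(X_t,T_{X_t})$ I may shrink $\Delta_\varepsilon$ so that $H^0(X_t,T_{X_t})=0$ for every $t\in\Delta_\varepsilon$.

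Next I would bring in the moduli theory of hyperbolic structures. Since $\mathrm{Aut}(X_0)$ is finite, the local coarse moduli space $\mathcal M:=\mathrm{Kur}(X_0)/\mathrm{Aut}(X_0)$ of compact hyperbolic manifolds near $X_0$ is a genuine complex space, and the versal family over $\mathrm{Kur}(X_0)$ together with the vanishing $H^0(X_t,T_{X_t})=0$ yields a well-defined holomorphic classifying map $\Phi:\Delta_\varepsilon\to\mathcal M$ with $\Phi(t)=[X_t]$ for all $t\in\Delta_\varepsilon$. Then $\Phi^{-1}([H])$ is an analytic subset of the one-dimensional disk $\Delta_\varepsilon$; it contains every $t_i$, hence has $0$ as an accumulation point, and a proper analytic subset of a disk is discrete, so $\Phi^{-1}([H])=\Delta_\varepsilon$. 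Because $\mathcal M$ separates isomorphism classes of manifolds near $X_0$ (the point of the quotient construction with finite automorphisms), this means $X_t\cong H$ for every $t\in\Delta_\varepsilon$; in particular $X_0\cong H$ and $\Delta_\varepsilon\subseteq Z$, so $0$ lies in the interior of $Z$. (One can also avoid a global $\mathcal M$ here: using $H^0(T_{X_t})=0$ and the universality of the Kuranishi family, a manifold with no holomorphic vector fields occurs in its own Kuranishi family only over the base point, so the classifying germ $\Delta_\varepsilon\to\mathrm{Kur}(X_0)$ vanishes on $\{t_i\}$ and is therefore identically zero, giving $X_t\cong X_0\cong H$ near $0$.)

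Finally I would conclude by a clopen argument on the connected base. Let $V\subseteq\Delta$ be the set of points admitting a neighborhood contained in $Z$. By construction $V$ is open, and by the previous paragraph $V\neq\varnothing$ whenever $Z$ is not discrete. To see that $V$ is closed in $\Delta$, take $s_n\in V$ with $s_n\to s_\infty\in\Delta$: if $s_n=s_\infty$ for infinitely many $n$ then $s_\infty\in V$, and otherwise $s_\infty$ is an accumulation point of $Z\supseteq V$, so the two previous steps applied at $s_\infty$ produce a neighborhood of $s_\infty$ inside $Z$, i.e.\ $s_\infty\in V$. Since $\Delta$ is connected and $V$ is nonempty, open, and closed, $V=\Delta$, hence $Z=\Delta$. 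Thus $Z$ is either discrete or all of $\Delta$.

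I expect the genuine obstacle to be the middle step: making precise, within the analytic category, that the condition ``$X_t\cong H$'' cuts out an \emph{analytic} subset of the base. Concretely, this requires either enough of the coarse moduli space of compact hyperbolic manifolds — the complex-space structure on $\mathrm{Kur}(X_0)/\mathrm{Aut}(X_0)$, the holomorphy of the induced classifying map, and the separation of isomorphism classes — or, along the alternative route, the local rigidity statement that a compact complex manifold without holomorphic vector fields appears in its Kuranishi family only over the origin. Everything else (Wright's closedness, Brody's openness, semicontinuity of $h^0(T)$, and the open--closed dichotomy on the connected disk) is then routine.
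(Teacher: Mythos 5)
Your main route is essentially the paper's proof: Wright's closedness (\cite[COROLLARY 4.1]{Wr77}, Lemma \ref{limit}) plus Brody's openness (Lemma \ref{open'}) to produce a sub-disk of hyperbolic fibers, then the coarse moduli space of compact hyperbolic manifolds --- constructed exactly as the charts $\mathrm{Kur}(X_0)/\mathrm{Aut}(X_0)$ glued together and shown Hausdorff in Proposition \ref{sep} --- to conclude via the identity theorem that the $H$-locus fills the sub-disk (Corollary \ref{analytic}), and finally a connectedness argument (your clopen set $V$ and the paper's supremum along a segment are interchangeable). You also correctly locate the crux in the separation and holomorphy properties of the classifying map.

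One caution: your parenthetical ``alternative route'' is the one step that would fail as stated. That a compact complex manifold with $H^0(X_0,T_{X_0})=0$ occurs in its own Kuranishi family only over the base point is not a formal consequence of the vanishing of vector fields; that vanishing gives universality at the base point, hence uniqueness of the classifying \emph{germ}, but says nothing about whether $X_s\cong X_0$ for $s\neq 0$ in the Kuranishi space. What is actually needed is that, after shrinking, an isomorphism $X_{s'}\cong X_{s''}$ forces $s'$ and $s''$ into the same $\mathrm{Aut}(X_0)$-orbit, and the paper obtains this from the compactness of $\bigcup_{s',s''}\mathrm{Isom}(X_{s'},X_{s''})$ (\cite[THEOREM 6]{Wr77}, which rests on the distance-decreasing property of the Kobayashi metric) combined with \cite[Lemma 4.1]{NS68}. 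So hyperbolicity is used again precisely at the point you hoped to bypass; without such a properness input one cannot rule out a sequence $s_n\to 0$ with $X_{s_n}\cong X_0$ whose isomorphisms fail to converge. Your primary argument, which routes through the separated moduli space, does not suffer from this defect.
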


Our proof strategy of Theorem \ref{main theorem}\footnote{\label{fn1} If Kobayashi's conjecture holds true, then Theorem \ref{main theorem} can be also obtained by the moduli theory of canonically polarized projective manifolds. This is pointed out to us by Professor Kang Zuo.} hinges on two fundamental properties: the openness of hyperbolicity in deformation theory and the Hausdorff property of the coarse moduli space $\mathfrak{M}$ given by Wright in \cite[THEOREM 7]{Wr77} (a detailed proof given in Proposition \ref{sep} here). This moduli space parametrizes isomorphism classes of compact hyperbolic complex manifolds.  Suppose that there is a subset $E\subset \Delta$ with accumulation points in $\Delta$ such that $X_{t}\cong H$ for any $t\in E$. The crux of our argument lies in finding an open subset $U\subset\Delta$ such that:
\begin{enumerate}[(1)]
\item All fibers $X_{t}$ for $t\in U$ are hyperbolic;
\item The open subset $U$ contains an accumulation point of $E$ in $\Delta$.
\end{enumerate}
For the coarse moduli space $\mathfrak{M}$ of compact hyperbolic complex manifolds, there exists a holomorphic morphism $\sigma_{U}:U\rightarrow \mathfrak{M}$ sending each point $t\in U$ to the corresponding isomorphism class $[X_t]$ of the hyperbolic manifold
$X_{t}$ in $\mathfrak{M}$. The image $\sigma_{U}(U\cap E)$ consists of a single point $[H]$. By the Hausdorff property of $\mathfrak{M}$, $[H]$ is closed in
$\mathfrak{M}$. Since $\sigma_U$ is holomorphic and its preimage $\sigma_{U}^{-1}([H])$ is not discrete, it follows that $\sigma_{U}^{-1}([H])$ must be the whole set $U$. Thus, $X_{t}\cong H$ for all $t\in U$. Finally, a proof by contradiction establishes that any fiber $X_{t}\cong H$ for $t\in\Delta$.

\begin{exam}\label{ex}
There exists a smooth family of genus $g\ge2$ curves which contains countably (but not finitely) many isomorphic fibers. Denote by $\mathcal{M}_g$ the moduli stack of smooth curves of $g\ge2$.  For a point $p\in \mathcal{M}_g$, let $\Delta_\epsilon$ be an embedded small disk in the smooth affine chart of $p$. The universal family $\mathcal{C}_g$ over $\mathcal{M}_g$ induces a smooth family $\pi_\epsilon:\mathcal{X}_\epsilon\to \Delta_\epsilon$. As $\mathcal{M}_g$ is a Delinge--Mumford stack, the isomorphic fibers of $\pi_\epsilon$ are at most finite. Let $\{a_k\}_{k=1}^\infty$ be a sequence of real numbers satisfies that 
$$0\le a_k<1,\quad \quad \lim_{k\rightarrow +\infty} a_k=1,\quad \quad \sum_{k=1}^{\infty}(1-a_k)<\infty.$$
By \cite[15.21 Theorem]{Ru87}, the Blaschke product 
$$
\phi(z):=\epsilon\prod_{k=1}^{\infty}\frac{a_k-z}{1-a_kz}
$$
is a holomorphic map from the unit disk $\Delta$ to $\Delta_\epsilon$ with infinitely many zero points $\{a_k\}$. Denote by $\mathcal{X}$ the smooth family which is the pullback of $\mathcal{X}_\epsilon$ by $\phi$. Then it is a smooth family of genus $g\ge2$ curves with discrete (but not finitely many) isomorphic fibers. 
\end{exam}

Contrast to Example \ref{ex}, a direct application of Theorem \ref{main theorem} to the smooth family with the compact base yields: 
\begin{cor}\label{1.3}
Let $\pi: \cX\rightarrow Y$ be a smooth family of compact complex manifolds over a compact Riemann surface $Y$, and $H$ a compact hyperbolic complex manifold. Then the $H$-locus 
$$\mathcal{H}:=\{t\in Y: X_t\cong H\}$$
is either at most finite or the whole $Y$.
\end{cor}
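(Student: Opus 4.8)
The plan is to deduce this dichotomy from Theorem \ref{main theorem} by a connectedness argument on $Y$. Recall that a compact Riemann surface is a connected compact Hausdorff space, and that every point $y\in Y$ admits a coordinate neighborhood $D_y\subseteq Y$ biholomorphic to the unit disk $\Delta\subset\mathbb{C}$. Restricting $\pi$ over $D_y$ yields a proper holomorphic submersion with compact fibers, hence a smooth family $\mathcal{X}|_{D_y}\to D_y$ in the sense of the paper, so Theorem \ref{main theorem} applies on each such disk, and the $H$-locus of the restricted family is exactly $\mathcal{H}\cap D_y$.

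Let $\mathcal{H}'\subseteq Y$ be the set of accumulation points of $\mathcal{H}$. I would first show that $\mathcal{H}'$ is both open and closed in $Y$. Closedness is automatic, since the derived set of any subset of a $T_1$ space is closed. For openness, take $y_0\in\mathcal{H}'$ and identify $D_{y_0}$ with $\Delta$. Then $\mathcal{H}\cap D_{y_0}$ is the $H$-locus of the restricted family and it has $y_0$ as an accumulation point in $D_{y_0}$, so it is not a discrete subset of $D_{y_0}$; by Theorem \ref{main theorem} it must equal all of $D_{y_0}$. In particular $D_{y_0}\subseteq\mathcal{H}$, and because $D_{y_0}$ has no isolated points, every point of $D_{y_0}$ is an accumulation point of $\mathcal{H}$, i.e. $D_{y_0}\subseteq\mathcal{H}'$. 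Hence $\mathcal{H}'$ is open.

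Now invoke connectedness of $Y$: either $\mathcal{H}'=\varnothing$ or $\mathcal{H}'=Y$. If $\mathcal{H}'=\varnothing$, then $\mathcal{H}$ has no accumulation point in the compact Hausdorff space $Y$, and an infinite subset of such a space necessarily accumulates somewhere, so $\mathcal{H}$ is finite. If $\mathcal{H}'=Y$, then every $y\in Y$ is an accumulation point of $\mathcal{H}$, and the argument of the previous paragraph gives $D_y\subseteq\mathcal{H}$ for each $y$, whence $\mathcal{H}=Y$. This establishes the claimed dichotomy.

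I do not expect a genuine obstacle in this argument: all the analytic substance is already carried by Theorem \ref{main theorem}, and what remains is the routine local-to-global passage. The only points deserving a line of care are checking that the restriction of $\pi$ to a coordinate disk is again a smooth family (properness, submersivity, and compactness of fibers are all preserved under restriction to the preimage of an open set), and using compactness of $Y$ to upgrade ``$\mathcal{H}$ has no accumulation point'' to ``$\mathcal{H}$ is finite.''
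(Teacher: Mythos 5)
Your argument is correct and is exactly the ``direct application of Theorem \ref{main theorem}'' that the paper invokes without writing out: the paper gives no further details for this corollary, and your local-to-global passage (coordinate disks, clopenness of the accumulation set, connectedness of $Y$, and compactness to convert ``no accumulation point'' into ``finite'') is the intended routine verification. The only cosmetic point is that ``has an accumulation point in $D_{y_0}$'' is what Theorem \ref{main theorem} (in its restated equivalent form) actually requires, rather than ``not discrete,'' but your use of it is consistent with the paper's meaning.
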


In \cite[Corollary 1.3]{lrw}, the first two authors--K. Wang prove that if some fiber of a smooth K\"ahler family is a minimal manifold of general type, then so are all fibers of this family. 
By use of this, E. Viehweg–K. Zuo’s birational isotriviality \cite[Theorem 0.1]{vz} and also the first author–X. Liu’s isotriviality \cite[Theorem 7.1]{LL24}, the first two authors--Wang  
\cite[Corollary 1.6, Theorem 1.7]{lrw} prove that for a smooth K\"ahler family over $\mathbb{P}^1$ or an elliptic curve, if $H$ is a compact projective manifold with the big and nef (or more generally semiample) canonical line bundle, the set $\mathcal{H}$ is either empty or the whole $Y$, while a conjecture of Kobayashi (1970) \cite[(7.4.13) Remark]{Ko98} asserts that the canonical line bundle of a compact (K\"ahler) hyperbolic manifold is ample. Alternatively, the celebrated Lang conjecture \cite{Ln86} predicts that for a smooth projective complex manifold $X$, it is
Kobayashi hyperbolic if and only if $X$ as well as all of its subvarieties are of general
type, while Mori’s breakthrough \cite{Mo79} shows that the hyperbolicity of (at least) the projective $X$ implies the nefness of its canonical bundle, or \cite[Exercise 8 on p. 219]{Db01} shows that the canonical line bundle of a smooth projective
variety of general type and containing no rational curves is actually ample, both due to
the absence of rational curves.

Thus, it is natural to propose:
\begin{conj}
With the setting of Corollary \ref{1.3}, if $Y$ is isomorphic to $\mathbb{P}^1$ or an elliptic curve, then  $\mathcal{H}$ is empty or the whole $Y$.  
\end{conj}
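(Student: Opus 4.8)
The plan is to leverage the dichotomy already obtained in Corollary \ref{1.3}: since $\mathcal{H}$ is either finite or all of $Y$, it suffices to rule out that $\mathcal{H}$ is a nonempty \emph{finite} proper subset. Equivalently, assuming $\mathcal{H}\neq\emptyset$, I would show that $\pi$ is isotrivial, so that $\mathcal{H}=Y$. The guiding idea is to reduce the statement to the isotriviality theorem of the first two authors--Wang \cite[Corollary 1.6, Theorem 1.7]{lrw}, which already yields the conclusion for a smooth K\"ahler family over $\mathbb{P}^1$ or an elliptic curve whose distinguished fiber $H$ carries a semiample (or big and nef) canonical line bundle. The whole difficulty is thereby transferred to two issues absent from the hypotheses of Corollary \ref{1.3}: the positivity of $K_H$ for a \emph{hyperbolic} $H$, and the K\"ahlerness of $\pi$.

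For the positivity step, I would recall that a compact hyperbolic manifold contains no rational curves; when $H$ is projective, Mori's cone theorem \cite{Mo79} forces $K_H$ to be nef, and, granting that $H$ is of general type (the projective case of Lang's conjecture \cite{Ln86}, or directly Kobayashi's conjecture \cite[(7.4.13) Remark]{Ko98} that $K_H$ is ample), $K_H$ becomes big and nef, indeed semiample when ample, cf.~\cite[Exercise 8 on p.~219]{Db01}; thus $H$ is a canonically polarized projective manifold, minimal of general type. Assuming moreover that $\pi$ is K\"ahler, the presence of one fiber $X_{t_0}\cong H$ minimal of general type spreads, by \cite[Corollary 1.3]{lrw}, to the conclusion that \emph{every} fiber $X_t$ is minimal of general type; the relative canonical bundle $K_{\mathcal{X}/Y}$ is then $\pi$-big and nef with ample restriction to each fiber, whence $\pi$ is a projective morphism and $\mathcal{X}$ is projective over the projective base $Y$.

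Granting these reductions, the endgame is the isotriviality input quoted in the introduction. Over $Y\cong\mathbb{P}^1$ or an elliptic curve, the smooth family of canonically polarized manifolds is birationally isotrivial by Viehweg--Zuo \cite[Theorem 0.1]{vz}, since $\mathbb{P}^1$ and elliptic curves are not of log general type and so the variation must vanish; the first author--Liu's isotriviality \cite[Theorem 7.1]{LL24} then promotes birational isotriviality to genuine isotriviality of the smooth family, so every fiber is isomorphic to $X_{t_0}\cong H$ and $\mathcal{H}=Y$, as desired.

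The main obstacle is precisely the first reduction: establishing \emph{unconditionally} that a compact hyperbolic $H$ has semiample canonical bundle is exactly Kobayashi's conjecture, which remains open, so the argument above is conditional on it (or on Lang's conjecture in the projective range). A secondary difficulty is the non-K\"ahler generality of Corollary \ref{1.3}: the K\"ahlerness reduction presupposes that the distinguished fiber is already projective, so a genuinely non-K\"ahler hyperbolic $H$ would require a different mechanism. An alternative, purely moduli-theoretic route would be to prove that the coarse moduli space $\mathfrak{M}$ of Proposition \ref{sep} is Brody hyperbolic; then, since $\mathbb{P}^1$ and every elliptic curve receive a nonconstant holomorphic image of $\mathbb{C}$, any moduli map $\sigma\colon Y\to\mathfrak{M}$ would be constant and force isotriviality. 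This second approach, however, presupposes that \emph{every} fiber is hyperbolic, and closing that gap—rather than working on the open hyperbolic locus supplied by openness—is itself its crux.
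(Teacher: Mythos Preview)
This statement is labeled a \emph{Conjecture} in the paper and is not proved there; the paragraph immediately preceding it is the authors' motivation for proposing it, not a proof. That motivation is exactly the conditional argument you outline: granting Kobayashi's conjecture (so that $K_H$ is ample) and a K\"ahler assumption on the family, \cite[Corollary 1.3]{lrw} propagates ``minimal of general type'' to every fiber, and then \cite[Corollary 1.6, Theorem 1.7]{lrw} (built on \cite[Theorem 0.1]{vz} and \cite[Theorem 7.1]{LL24}) forces isotriviality over $\mathbb{P}^1$ or an elliptic curve, hence $\mathcal{H}=Y$. You have correctly identified both gaps yourself---the positivity of $K_H$ is precisely Kobayashi's (or Lang's) open conjecture, and the K\"ahler hypothesis is absent from Corollary~\ref{1.3}---and these are the reasons the paper states the result as a conjecture rather than a theorem. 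So your proposal is not a proof but a faithful reconstruction of the paper's own heuristic, with the same unresolved obstructions.

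Your alternative moduli-theoretic route (show $\mathfrak{M}$ is Brody hyperbolic and deduce that any map from $\mathbb{P}^1$ or an elliptic curve is constant) is not discussed in the paper. As you note, it has its own gap: the moduli map $\sigma$ is only defined on the open hyperbolic locus of $Y$, not on all of $Y$, so even Brody hyperbolicity of $\mathfrak{M}$ would not immediately force $\sigma$ to be constant without first knowing every fiber is hyperbolic.
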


\textbf{Acknowledgements:}  The authors would like to express their gratitude to Professor Kang Zuo for pointing out Footnote \ref{fn1}. We are also sincerely grateful to Professor Ya Deng for suggesting that we consider the pseudo-Brody hyperbolicity analogue of Theorem \ref{main theorem}.

\section{Preliminaries: hyperbolic complex structures and their moduli}
In this section, we will introduce basics on hyperbolic complex structures and their coarse moduli space, to be used to prove Theorem \ref{main theorem}. 
\subsection{Hyperbolicity}
In this subsection, we recall two equivalent definitions of hyperbolic complex manifolds via Kobayashi pseudo-distance and its infinitesimal form, as detailed in \cite{Ro71}. 

We begin by defining the Kobayashi pseudo-distance $d_{X}$ on a complex manifold $X$. For any two points $p,q\in X$, define the function $\rho_{X}: X\times X\rightarrow[0,+\infty]$ by
$$\rho_{X}(p,q)={\inf}_{R}\left\{\frac{1}{2}\ \text{log}\ \frac{R+1}{R-1}:\text{there exists}\ f\in H(\Delta_{R},X)\ \text{with}\ f(0)=p,f(1)=q\right\},$$
where the infimum is taken over all real numbers
$R>1$ and all holomorphic maps $f$ from the open disk $\Delta_{R}=\{z\in \mathbb{C}:|z|<R\}$ to $X$. If no such holomorphic map exists, we set $\rho_{X}(p,q)=+\infty$. 

The \emph{Kobayashi pseudo-distance} $d_{X}$ is then defined as
$$d_{X}(p,q)={\inf}\ \{\Sigma_{i=0}^{k-1}\ \rho_{X}(p_{i},p_{i+1})\},$$
where the infimum is taken over all finite sequences of points $p=p_{0},p_{1},..., p_{k}=q$ in $X$ and all integers $k\geq 1$. Note that $\rho_{X}$ does not necessarily satisfy the triangle inequality, and $d_{X}(p,q)\leq\rho_{X}(p,q)$ always holds. The function $\rho_{X}$ is called the \emph{unreduced Kobayashi distance}. 
\begin{defn}(e.g.,\ \cite[p. 133]{Ro71})
A complex manifold $X$ is said to be \emph{hyperbolic} if the Kobayashi pseudo-distance $d_{X}$ is a distance.
\end{defn}

Given two complex manifolds $X,Y$ and a holomorphic map $g:X\rightarrow Y$, the definition of the Kobayashi pseudo-distance yields
$$d_{Y}(g(p),g(q))\leq d_{X}(p,q)$$
for any $p,\ q\in X$. 
Consequently, the Kobayashi pseudo-distance decreases under the holomorphic map $g$. In particular, if $W\subseteq X$ is a submanifold (with the induced complex structure), then for any $\ p,q\in W,$
$$d_{X}(p,q)\leq d_{W}(p,q).$$

Royden \cite[\S\ 2]{Ro71} defined the \emph{infinitesimal form} $F_{X}$ on a complex manifold $X$: For a point $x\in X$ and a tangent vector $v\in T_{x}X$, 
$$F_{X}(\langle x,v\rangle):={\inf}_{R}\Big\{\frac{1}{R}:R>0,\ \text{there exists}\  \varphi\in H(\Delta_{R},X)\ \text{with}\ \varphi(0)=x,\ \varphi'(0)=v\Big\},$$
where $\varphi'(z_{0})=\varphi_{*}(\frac{\partial}{\partial z})_{z=z_0}$.  
And \cite[Proposition 3]{Ro71} proved that it is upper semi-continuous on the tangent bundle $TX$. 
\begin{defn}(e.g.,\ \cite[p. 133]{Ro71})\label{2}
A complex manifold $X$ is \emph{hyperbolic} if for each point $x\in X$, there exists a coordinate neighborhood $U_{x}$ of $x$ and a constant $c_{x}>0$ such that
\begin{equation*}
F_{X}(\langle y,\eta\rangle)\geq c_{x}\|\eta\|
\end{equation*}
holds for all $\langle y,\eta\rangle\in TU_{x}$, where $TU_{x}$ is the tangent bundle of $U_{x}$ and $\|\cdot\|$ denotes the Euclidean norm. 
\end{defn}

Royden \cite[Theorem 1]{Ro71} established that the Kobayashi pseudo-distance $d_{X}$ is the integrated form of the infinitesimal form $F_{X}$, i.e.,
$$d_{X}(p,q)={\inf}_{\gamma}\int _{\gamma} F_{X}ds,$$
where the infimum is taken over all piecewise $C^{1}$ curves $\gamma$ joining  $p$ to $q$ in $X$.

There are also many other characterizations of hyperbolic complex manifolds in \cite[THEOREM 2]{Ro71} and \cite[THEOREM 2]{HK84}. Recall several equivalent conditions for a compact complex manifold $X$ to be hyperbolic:
\begin{enumerate}
\item A compact complex manifold $X$ is hyperbolic in the sense of Definition \ref{2}.
\item The compact complex manifold $X$ is \emph{tight} with respect to the Kobayashi pseudo-distance $d_X$, that is, the family of holomorphic maps $H(\Delta,X)$ is equicontinuous (with respect to the Euclidean metric in $\Delta$ and the Kobayashi pseudo-distance on $X$).
\item Every entire holomorphic curve in $X$ is constant, i.e., every holomorphic map $f:\mathbb{C}\rightarrow X$  is constant. This is Brody's theorem (cf. \cite[THEOREM 4.1]{Br78}).
\end{enumerate}

\subsection{The coarse moduli space of compact hyperbolic complex manifolds} \label{2.2}
Before introducing this coarse moduli space, we require the notion of a complex space in the complex analytic sense.
\begin{defn}(e.g.,\ \cite[\S\ 2, p. 27]{Re94})
A $\mathbb{C}$-ringed space $(X, \mathcal{O}_X)$ is called a \emph{complex space} if $X$ is Hausdorff and locally isomorphic to a complex model space, that is, for each point $x \in X$, there exist a neighborhood $U \ni x$ and an isomorphism of ringed spaces $(U, \mathcal{O}_X|_U) \simeq (V, \mathcal{O}_V),$  where $(V, \mathcal{O}_V)$ is a given local model.
\end{defn}

Denote by \textbf{An} the category of complex spaces.
For a complex space $B$, let $\mathcal{P}(B)$ be the set of smooth family of compact hyperbolic complex manifold as follows:
\begin{equation*}
\begin{aligned}
\mathcal{P}(B)=&\Big\{(f:\mathcal{X}\rightarrow B): f\ \text{is a smooth family};\\
& f^{-1}(b)\ \text{is a compact hyperbolic complex manifold for any}\ b\in B\Big\}.
\end{aligned}
\end{equation*}

The arrows in $\mathcal{P}(B)$ are defined as follows:
$(f_1:\mathcal{X}_{1}\rightarrow B)$ to $(f_2:\mathcal{X}_{2}\rightarrow B)$ are  $B$-isomorphisms \begin{equation*}
\CD
  \Phi:\mathcal{X}_{1} @> \cong>> \mathcal{X}_{2}. \\
\endCD
\end{equation*}
Therefore, $\mathcal{P}$ forms a groupoid fibered over the category of complex spaces \textbf{An}. Let $p:\mathcal{P}\to \textbf{An}$ be the natural functor. 
A \emph{coarse moduli space} for $\mathcal{P}$ is a complex space $\mathfrak{M}$ equipped with a natural transformation
$$\alpha: \mathcal{P} \to \operatorname{Hom}_{\textbf{An}}(-, \mathfrak{M})$$
satisfying:  

\noindent($\star$) Bijectivity at points: For the one-point space $\text{\{pt\}}$, the map $$\alpha_{\{\text{pt}\}}: \mathcal{P}(\{\text{pt}\})\longrightarrow\operatorname{Hom}_{\textbf{An}}(\{\text{pt}\},\mathfrak{M})$$ is bijective (canonically identifying $\operatorname{Hom}_{\textbf{An}}(\{\text{pt}\},\mathfrak{M})$ with the underlying set of $\mathfrak{M}$);

\noindent($\star\star$) Universal property: For any complex space $\mathfrak{N}$ and any natural transformation $$\eta:\mathcal{P}\longrightarrow\operatorname{Hom}_{\textbf{An}}(-,\mathfrak{N}),$$
there exists a unique natural transformation
$$\beta: \operatorname{Hom}_{\textbf{An}}(-,\mathfrak{M})\longrightarrow\operatorname{Hom}_{\textbf{An}}(-,\mathfrak{N})$$
such that
$$\beta\circ \alpha=\eta.$$

According to \cite[THEOREM 5]{Sc91}, $\mathcal{P}$ admits a coarse moduli space if it satisfies the following:
\begin{enumerate}[a)]
\item Existence of universal deformations: For any object $a$ with $p(a)=\{\text{pt}\}$, there exists a universal deformation of $a$;
\item Representability and properness of $\text{\bf{Isom}}$ functors: For any complex space $B$ and objects $c,d\in \text{Ob}\mathcal{P}$ with $p(c)=p(d)=B$, the functor $$\text{\bf{Isom}}_{B}(c,d):\textbf{An}/B\rightarrow\textbf{Sets}$$
     is representable by a proper morphism $\tau:T\rightarrow B$, where for each $b\in B$, the fiber $T_{b}$ is canonically identified with the set of isomorphisms $\text{Isom}(c_{b},d_{b})$ as a topological space.
\end{enumerate}

Recall that a deformation $f:(\mathcal{X},X_0)\rightarrow (B,b_{0})$ of a compact complex manifold $X_0$ is \emph{versal} if, for any deformation $f':(\mathcal{X'},X_0)\rightarrow (D,d_{0})$ of $X_0$, there exist a neighborhood $U_{d_0}\subset D$ of $d_0$ and a holomorphic map $\alpha:U_{d_0}\rightarrow B$ with $\alpha(d_{0})=b_0$ such that the restricted family $\mathcal{X'}|_{U_{d_0}}$ is isomorphic to the pullback family $\mathcal{X}\times_{B}U_{d_0}$. The deformation is \emph{universal} if, in addition, the map $\alpha$ is unique.

Let $f:\mathcal{X}\rightarrow B$ and $g:\mathcal{Y}\rightarrow B$ be smooth families of compact hyperbolic complex manifolds over a complex space $B$. Define the functor
$$\text{\bf{Isom}}_{B}(\mathcal{X},\mathcal{Y}):\textbf{An}/B\rightarrow \textbf{Sets}$$
by assigning to each complex space  $\mathcal{T}$ over $B$ the set
$$\text{\bf{Isom}}_{B}(\mathcal{X},\mathcal{Y})(\mathcal{T})= \{ \text{the isomorphisms}\ \varphi: \mathcal{X}\times_{B}\mathcal{T}\rightarrow \mathcal{Y}\times_{B}\mathcal{T}\}.$$

We now verify that the fibered groupoid $\mathcal{P}$ satisfies both conditions a) and b). 
By \cite[(5.4.4) Theorem]{Ko98}, the holomorphic automorphism group $\text{Aut}(X)$ of any compact hyperbolic complex manifold
$X$ is finite. Consequently, \cite[COROLLARY of THEOREM 4.1]{Wa69} implies that $X$ admits a universal deformation. This verifies condition a).

According to \cite[Part 3]{Kh07}, the functor $\text{\bf{Isom}}_{B}(\mathcal{X},\mathcal{Y})$ is representable by a proper morphism $$\text{Isom}_{B}(\mathcal{X},\mathcal{Y})\rightarrow B,$$ 
where  $\text{Isom}_{B}(\mathcal{X},\mathcal{Y})$ denotes  the set of isomorphisms from $\mathcal{X}$ to $\mathcal{Y}$ over $B$. This establishes condition b).

Thus, the groupoid $\mathcal{P}$ admits a coarse moduli space $\mathfrak{M}$ whose points parameterize the isomorphism classes of compact hyperbolic complex manifolds. Moreover, a smooth family $f:\mathcal{X}\rightarrow B$ of such manifolds induces a holomorphic morphism 
$$\tilde{f}:B\rightarrow\mathfrak{M}$$
sending each point $b\in B$ to the isomorphism class of hyperbolic structure of the fiber $X_{b}$.

Although the existence of a coarse moduli space $\mathfrak{M}$ for $\mathcal{P}$ is established in this section and in \cite[THEOREM 7]{Wr77}, respectively, the detailed complex structure of $\mathfrak{M}$ is not explicitly constructed in these references. We establish this structure in the next section.

\section{Deformation rigidity of hyperbolic complex structures}
In this section, we provide the proof of Theorem \ref{main theorem}. We begin by explicitly characterizing the complex structure of $\mathfrak{M}$. We need: 
\begin{lemma}[{\cite{Br78}}]\label{open'}
Let $M$ be a fixed compact smooth manifold with a Hermitian metric. Suppose $M$ admits a family of complex structures smoothly parameterized by a complex space $S$. For each $s\in S$, denote by $M_s$ the corresponding complex manifold. Then the set
$$\{s\in S: M_{s}\ \text{is hyperbolic}\}$$
is open in $S$.
\end{lemma}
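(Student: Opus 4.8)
The plan is to prove openness of the hyperbolic locus by a contradiction argument using the equivalence between hyperbolicity and the existence of a uniform lower bound for the infinitesimal Kobayashi metric (Definition \ref{2}), together with the upper semi-continuity of the Royden metric in all variables. Suppose $s_0 \in S$ is such that $M_{s_0}$ is hyperbolic, but there is a sequence $s_j \to s_0$ in $S$ with each $M_{s_j}$ non-hyperbolic. By Brody's theorem (condition (3) in the list of equivalent characterizations), for each $j$ there exists a non-constant entire holomorphic curve $f_j : \mathbb{C} \to M_{s_j}$. The key tool is the \emph{Brody reparametrization lemma}: after reparametrizing, one may assume each $f_j$ satisfies $\|f_j'(0)\|_h = 1$ and $\|f_j'(z)\|_h \le \frac{1}{1 - |z|^2/R_j^2}$ (or a similar normalization on disks $\Delta_{R_j}$ with $R_j \to \infty$), where $\|\cdot\|_h$ is measured in the fixed Hermitian metric $h$ on the underlying smooth manifold $M$ (the same for all $s$, since all $M_s$ share the underlying $M$).

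First I would set up the total space $\mathcal{M} = M \times S$ carrying the family of complex structures, and regard the maps $f_j$ as landing in a common compact set. The uniform gradient bound gives an equicontinuity estimate for the $f_j$ on compact subsets of $\mathbb{C}$, so by the Arzelà--Ascoli theorem (together with a diagonal argument over an exhaustion of $\mathbb{C}$) a subsequence converges uniformly on compact sets to a continuous map $f_\infty : \mathbb{C} \to M$. Because the complex structures vary smoothly in $s$ and $s_j \to s_0$, the limit $f_\infty$ is holomorphic with respect to the complex structure of $M_{s_0}$ — this is where one invokes that holomorphy is a closed condition under local uniform convergence once the target structure has stabilized; concretely one passes to local coordinate charts and uses that $\bar\partial_{s_j} f_j = 0$ with $\bar\partial_{s_j} \to \bar\partial_{s_0}$. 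Moreover $\|f_\infty'(0)\|_h = 1$, so $f_\infty$ is non-constant. This produces a non-constant entire curve in the hyperbolic manifold $M_{s_0}$, contradicting Brody's theorem, and hence the non-hyperbolic locus has no accumulation point in the hyperbolic locus; equivalently the hyperbolic locus is open.

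The step I expect to be the main obstacle is controlling the interplay between the varying complex structures and the limit: one must ensure that the normalization of $f_j$ (via the Brody lemma) can be carried out uniformly in $j$ with a fixed Hermitian reference metric, and that the limiting map, a priori only holomorphic for a ``limit'' of structures, is genuinely holomorphic for $M_{s_0}$. A clean way to handle this is to work on $M \times S$ with the fibered almost-complex structure, apply the Brody lemma fiberwise but with constants depending only on the compact manifold $M$ and the continuously varying metric data, and then use elliptic regularity / the standard fact that a $C^1$ limit of $J_{s_j}$-holomorphic curves with $J_{s_j} \to J_{s_0}$ in $C^\infty$ is $J_{s_0}$-holomorphic. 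Since the statement is attributed to Brody \cite{Br78}, I would also remark that this is exactly the content of the openness part of \cite[Theorem 4.1]{Br78}, and the argument above is a rephrasing adapted to a family over a general complex base $S$ rather than a disk; no essentially new ingredient beyond Brody's original reparametrization technique is needed.
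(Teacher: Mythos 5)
Your argument is correct, but it takes a different (more self-contained) route than the paper. The paper's proof is a two-line citation: it invokes Brody's \cite[THEOREM 3.1]{Br78}, which says that the function $D(s):=\sup_{f\in \mathrm{Hol}(\Delta,M_s)}|f'(0)|$ is continuous on $S$ with values in $(0,\infty]$, together with \cite[LEMMA 1.1]{Br78}, which identifies hyperbolicity of $M_s$ with $D(s)<+\infty$; openness of $\{D<+\infty\}$ is then immediate. What you do instead is reprove the relevant half of that continuity statement directly: assuming a sequence of non-hyperbolic structures $s_j\to s_0$ with $M_{s_0}$ hyperbolic, you produce normalized Brody curves in the fibers $M_{s_j}$ and extract a limiting non-constant entire curve in $M_{s_0}$, contradicting Brody's criterion. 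This is essentially the argument underlying Brody's theorem itself, so no new ingredient is introduced, but your version has the merit of being explicit about the two genuinely delicate points that the citation hides: (i) that the Brody normalization can be taken with respect to a single Hermitian metric on the fixed underlying manifold $M$, uniformly in $j$ (using that the structures $J_{s_j}$ converge to $J_{s_0}$, so the induced norms on holomorphic tangent vectors are uniformly comparable), and (ii) that a local-uniform limit of $J_{s_j}$-holomorphic curves with $J_{s_j}\to J_{s_0}$ is $J_{s_0}$-holomorphic and that the convergence upgrades to $C^1_{loc}$ (so that $\|f_\infty'(0)\|>0$ and the limit is non-constant); your appeal to elliptic regularity, or equivalently to Cauchy estimates in converging local holomorphic charts, settles both. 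The trade-off is length versus transparency: the paper's proof buys brevity by outsourcing everything to Brody, while yours makes the openness mechanism visible and applies verbatim to a family over an arbitrary complex base $S$ rather than a disk, which is exactly the generality the lemma is stated in.
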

\begin{proof} \cite[THEOREM 3.1]{Br78} shows the function $D:S\rightarrow (0,\infty]$ defined by
$$D(s):=\mathrm{sup}_{f\in \mathrm{Hol}(\Delta,M_s)}|f'(0)|$$
is continuous, while \cite[LEMMA 1.1]{Br78} tells us that $D(s)<+\infty$ if and only if $M_s$ is hyperbolic. \end{proof}

\begin{prop}[{\cite[THEOREM 7]{Wr77}}]\label{sep}
Let $\mathcal{P}$ the groupoid fibered over $\mathbf{An}$ as in Section \ref{2.2}.
Then the set $\mathfrak{M}$ of such isomorphism classes admits a natural complex space structure and is the coarse moduli space for $\mathcal{P}$.  
\end{prop}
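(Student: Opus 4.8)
\emph{Strategy.} The plan is to deduce the existence of the complex structure from the abstract criterion of \cite[THEOREM 5]{Sc91}, whose two hypotheses are precisely conditions a) and b) of Section \ref{2.2}, both of which are checked there: condition a), the existence of a universal deformation for every compact hyperbolic manifold, from the finiteness of $\text{Aut}(X)$ (\cite[(5.4.4) Theorem]{Ko98}) together with \cite[COROLLARY of THEOREM 4.1]{Wa69}; condition b), the representability and properness of the $\mathrm{Isom}$ functor, from \cite[Part 3]{Kh07}. Granting them, \cite[THEOREM 5]{Sc91} produces a complex space $\mathfrak{M}$ whose points are the isomorphism classes of compact hyperbolic complex manifolds and which is a coarse moduli space for $\mathcal{P}$ in the sense of ($\star$) and ($\star\star$); as a complex space it is in particular Hausdorff. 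What remains is to describe the local complex structure explicitly and to record the Hausdorff property by a direct argument, the latter being the form in which the result enters the proof of Theorem \ref{main theorem}.

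\emph{Local model.} Fix a compact hyperbolic complex manifold $X_0$. As $\text{Aut}(X_0)$ is finite, $X_0$ admits a universal deformation $p_{X_0}:\mathcal{Z}_{X_0}\to B_{X_0}$ with $X_0=p_{X_0}^{-1}(b_0)$; shrinking $B_{X_0}$ and using the openness of hyperbolicity (Lemma \ref{open'}), every fiber of $p_{X_0}$ is again compact hyperbolic. Each $g\in\text{Aut}(X_0)$, read as an identification $g:X_0\to p_{X_0}^{-1}(b_0)$ of the central fiber, turns $p_{X_0}$ into a second universal deformation of $X_0$, so by uniqueness it induces a germ of biholomorphism of $(B_{X_0},b_0)$ with a covering family isomorphism; letting $g$ range over the finite group $\text{Aut}(X_0)$ and shrinking once more, this gives a holomorphic action of $\text{Aut}(X_0)$ on $B_{X_0}$, so that $Q_{X_0}:=B_{X_0}/\text{Aut}(X_0)$ is a complex space (by H.\ Cartan's theorem on quotients by finite groups). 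The complex space furnished by \cite[THEOREM 5]{Sc91} is then the union of the charts $Q_{X_0}$, glued along the open subsets where two of them parametrize the same isomorphism class, the chart map being $b\mapsto[X_b]$. Finally, for an arbitrary smooth family $f:\mathcal{X}\to B$ of compact hyperbolic manifolds, versality of the local universal deformations yields, near each point of $B$, a holomorphic map into the appropriate $B_{X_0}$ realizing $\mathcal{X}$ up to isomorphism, and composing with $B_{X_0}\to Q_{X_0}\hookrightarrow\mathfrak{M}$ produces the holomorphic classifying map $\tilde f:B\to\mathfrak{M}$, $b\mapsto[X_b]$; this is the natural transformation $\alpha$, and ($\star$), ($\star\star$) are the content of \cite[THEOREM 5]{Sc91}.

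\emph{Hausdorffness, and the main obstacle.} The one genuinely analytic ingredient, and the step I expect to be the main obstacle, is the properness of the $\mathrm{Isom}$ functor in condition b): it is what forces the gluing maps above to be biholomorphisms — equivalently, that after shrinking $B_{X_0}$ an isomorphism $X_b\cong X_{b'}$ with $b,b'\in B_{X_0}$ puts $b'$ into the $\text{Aut}(X_0)$-orbit of $b$ — and it is also what yields separatedness. For the Hausdorff property, suppose $[X_0]\neq[Y_0]$ admitted no disjoint neighborhoods in $\mathfrak{M}$; then in charts $Q_{X_0}$ and $Q_{Y_0}$ around them there would be sequences $b_n\to b_0$ in $B_{X_0}$ and $c_n\to c_0$ in $B_{Y_0}$ together with isomorphisms $\varphi_n:X_{b_n}\to Y_{c_n}$. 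By condition b), the space $\mathrm{Isom}_{B_{X_0}\times B_{Y_0}}(\mathrm{pr}_1^\ast\mathcal{Z}_{X_0},\mathrm{pr}_2^\ast\mathcal{Z}_{Y_0})$ is proper over $B_{X_0}\times B_{Y_0}$, so the points $(b_n,c_n,\varphi_n)$, lying over the convergent sequence $(b_n,c_n)\to(b_0,c_0)$, subconverge to a point over $(b_0,c_0)$ — that is, to an isomorphism $X_0=X_{b_0}\to Y_{c_0}=Y_0$ — contradicting $[X_0]\neq[Y_0]$. Hence $\mathfrak{M}$ is Hausdorff, and the proposition, which recovers \cite[THEOREM 7]{Wr77}, follows.
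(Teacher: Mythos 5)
Your proposal is essentially correct and shares the paper's overall architecture (Schumacher's criterion, local charts $B_{X_0}/\mathrm{Aut}(X_0)$ via Cartan's quotient theorem, gluing, then Hausdorffness), but it diverges at the one genuinely analytic step. For the Hausdorff property the paper does \emph{not} invoke properness of the $\mathrm{Isom}$ functor; it gives a direct argument: continuity of the infinitesimal Kobayashi metric in families (\cite[(3.11.9) Corollary]{Ko98} together with \cite[THEOREM 5]{Wr77}) yields uniform two-sided bounds $C^{-1}d_{X_{s}}\le d_{X_{s_0}}\le C d_{X_{s}}$ near $s_0$, whence any putative isomorphisms $\varphi_n:X_{s_n}\to Y_{t_n}$ are equicontinuous for the Kobayashi distances; Ascoli--Arzel\`a and Montel then produce a limiting biholomorphism $X_{s_0}\to Y_{t_0}$, contradicting $[I_1]\neq[I_2]$. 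You instead extract the contradiction from the properness of $\mathrm{Isom}_{B\times B}(\cdot,\cdot)$ over $B_{X_0}\times B_{Y_0}$ (condition b), via \cite[Part 3]{Kh07}): the image of a proper map is closed, so the locus $\{(b,c):X_b\cong Y_c\}$ cannot accumulate at $(b_0,c_0)$ without containing it. This is a legitimate and more economical route --- it reuses a hypothesis already needed for \cite[THEOREM 5]{Sc91} --- but it outsources exactly the Kobayashi-metric analysis to Khalfallah's properness theorem rather than exhibiting it, which is the content the paper wants on record. One place where you are quicker than the argument warrants: the claim that, after shrinking, an isomorphism $X_b\cong X_{b'}$ forces $b'$ into the $\mathrm{Aut}(X_0)$-orbit of $b$ does not follow from properness alone; the paper derives it from the compactness of $\bigcup_{s',s''\in\overline{W'}}\mathrm{Isom}(X_{s'},X_{s''})$ (\cite[THEOREM 6]{Wr77}) combined with \cite[Lemma 4.1]{NS68}, and you should cite (or reprove) that input rather than fold it into ``properness of $\mathrm{Isom}$.''
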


\begin{proof} The proof is divided into four steps: 
\setcounter{step}{0}
\renewcommand{\thestep}{(\Roman{step})} 
\begin{step}\label{step1}
Local complex structure on $\mathfrak{M}$.
\end{step}
Let  $X$ be a compact complex manifold and $I_{0}$ be a hyperbolic complex structure on $X$. Let $f:(\mathcal{X},X)\rightarrow (S,s_0)$ be a universal deformation of $X$ with the structure $I_0$. Denote by $\text{Aut}(I_0)$ the automorphism group of the structure $I_0$.

Since $f:(\mathcal{X},X)\rightarrow (S,s_0)$ is universal, for any holomorphic family $f':(\mathcal{X}',X)\rightarrow (T,t_0)$ of $X$ with the structure $I_0$, there exists a neighborhood $T'$ of $t_0$ and a unique holomorphic map $\alpha:T'\rightarrow S$ with $\alpha(t_0)=s_0$, and an isomorphism $\Phi:\mathcal{X}'|_{T'}\rightarrow \mathcal{X}\times_{S}T'$ over $T'$. This induces a commutative diagram:
\begin{equation*}\label{nbhd iso}
\CD
  \mathcal{X}'|_{T'} @> \Phi>> \mathcal{X} \\
  @V f'|_{T'} VV @V f VV  \\
  T' @>\alpha>> S,
\endCD
\end{equation*}
where, for each $t\in T'$, the restriction $\Phi|_{X'_{t}}:X'_{t}:=f'^{-1}(t)\rightarrow X_{\alpha_{(t)}}:=f^{-1}(\alpha(t))$ is an isomorphism. 

Now consider the case $\mathcal{X'}=\mathcal{X}$. Since $I_0$ is compact and hyperbolic, $\text{Aut}(I_0)$ is finite by {\cite[(5.4.4) Theorem]{Ko98}}. Therefore, there exists a neighborhood $U$ of $s_{0}$ in $S$ such that $\text{Aut}(I_0)$ acts holomorphically on $\mathcal{X}|_{U}$ and thus on $U$. By {\cite[THEOREM 6]{Wr77}}, one can choose a sufficiently small neighborhood $W'$ of $s_0$ such that the set $$A=\bigcup_{s',s''\in \overline{W'}}\text{Isom}(X_{s'},X_{s''})$$ is compact. 
Following {\cite[Lemma 4.1]{NS68}}, one may shrink $U$ so that for any $s',s''\in U$, a morphism $\phi:X_{s'}\rightarrow X_{s''}$ is an isomorphism if and only if $\phi\in\text{Aut}(I_0)$ and $\phi(s')=s''$. Equivalently, such isomorphism exists if and only if $s'$ and $s''$ lie in the same $\text{Aut}(I_0)$-orbit. 
Furthermore, by Lemma \ref{open'}, one can assume the family $f:\mathcal{X}\rightarrow (S,s_0)$ satisfies that:
\begin{enumerate}[i)]
\item Each fiber $X_{s}$ is hyperbolic for $s\in S$;
\item $\text{Aut}(I_0)$ acts on $S$ and, for $s',\ s''\in U,\  \phi:X_{s'}\rightarrow X_{s''}$ is isomorphic if and only if $s',\ s''$ are in the same $\text{Aut}(I_0)$-orbit.
\end{enumerate}

Since $\text{Aut}(I_0)$ is finite and acts holomorphically on $S$, the quotient $S/\text{Aut}(I_0)$ has a natural structure of a complex space from Cartan's theorem \cite[Th\'eor\`eme 4]{Ca57}. 
The canonical map $S\rightarrow\mathfrak{M}$, sending $s\in S$ to the hyperbolic isomorphism class of $X_{s}$, factors through the quotient and induces an injective holomorphic map $$S/\text{Aut}(I_0)\rightarrow\mathfrak{M}.$$ 
This map provides a ``local coordinate chart'' for $\mathfrak{M}$ at the isomorphism class of $I_0$.

\begin{step}\label{step2}Glue local charts to define the complex space structure on $\mathfrak{M}$.
\end{step}
We now show that the local coordinate charts constructed in Step \ref{step1} can be glued together to endow
$\mathfrak{M}$ with a complex space structure. 

Let $I_{1}$ and $I_2$ be two hyperbolic complex structures on $X$. For each $i=1,\ 2$, let $\mathcal{X}_{i}\rightarrow S_{i}$ be a family satisfying conditions i) and ii) from Step \ref{step1} with the structure $I_i$. 
Denote by 
$$p_{i}:S_i\rightarrow S_{i}':=S_{i}/\text{Aut}(I_i)$$ 
the canonical quotient map, which is a morphism of complex spaces. Let $\gamma_{i}:S_{i}'\rightarrow\mathfrak{M}$ be the corresponding canonical injective holomorphic map. 
We need to prove that if the intersection $V:=\gamma_{1}(S_{1}')\cap \gamma_{2}(S_{2}')$ is non-empty, then:
\begin{enumerate}[(a)]
\item $\gamma_{i}^{-1}(V)$ is open in $S_{i}',$ for each $i=1,2$;
\item The map $\gamma_{2}^{-1}\circ \gamma_{1}:\gamma_{1}^{-1}(V)\rightarrow
    \gamma_{2}^{-1}(V)$ is an isomorphism of complex spaces.
\end{enumerate}

Suppose $V\neq\emptyset$. Then there exist points $s_1\in S_1$ and $s_2\in S_2$ such that the structures $I_{s_1}$(on $X_{s_1}$) and $I_{s_2}$(on $X_{s_2}$) are isomorphic. Let $\varphi:X_{s_1}\rightarrow X_{s_2}$ be such an isomorphism. By the universality property (as encoded in the commutative diagram from Step \ref{step1}), $\varphi$ extends to an isomorphism of  families over an isomorphism $\widetilde{\varphi}:U_{1}\rightarrow U_2$ of suitable neighborhoods $U_{1}\subseteq S_1$ of $s_1$ and $U_{2}\subseteq S_2$ of $s_2$: 
\begin{equation*}
\CD
  \Phi:\mathcal{X}_{1}|_{U_{1}} @> \cong>> \mathcal{X}_{2}|_{U_{2}}. \\
\endCD
\end{equation*}
Since $\text{Aut}(I_i)$ is finite and $\text{Aut}(X_{s_{i}})$ is precisely the isotropy subgroup of $\text{Aut}(I_i)$ at $s_{i}$, for each $i=1,2$, one can choose the neighborhood $U_{i}$ of $s_{i}$ such that:
\begin{enumerate}[1)]
\item $U_{i}$ is $\text{Aut}(X_{s_{i}})$-stable;
\item Two points in $U_{i}$ are $\text{Aut}(I_i)$-equivalent if and only if they are $\text{Aut}(X_{s_{i}})$-equivalent.
\end{enumerate}
As the quotient map $p_{i}$ is open, $p_{i}(U_{i})$ is open in $S_{i}'$, establishing condition (a).
Moreover, the quotient map $p_i$ restricts to a canonical isomorphism $p_{i}(U_{i})\cong U_{i}/\text{Aut}(X_{s_{i}})$. The map
$$\gamma_{2}^{-1}\circ \gamma_{1}|_{p_{1}(U_{1})}:p_{1}(U_{1})\rightarrow p_{2}(U_{2})$$
is induced by the isomorphism $\widetilde{\varphi}:U_{1}\rightarrow U_{2}$ underlying the family isomorphism $\Phi$. Since $\widetilde{\varphi}$ is a biholomorphism and intertwines the group actions (by the choice of $U_i$ and property 2)), it descends to an isomorphism
\begin{equation*}
\CD
  U_{1}/\text{Aut}(X_{s_1}) @> \cong>> U_{2}/\text{Aut}(X_{s_2}). \\
\endCD
\end{equation*}
Consequently, the composition $\gamma_{2}^{-1}\circ \gamma_{1}:\gamma_{1}^{-1}(V)\rightarrow\gamma_{2}^{-1}(V)$ is an isomorphism of complex spaces, proving condition (b).
\begin{step} \label{step3} Establish the Hausdorff property for $\mathfrak{M}$.
\end{step}
We now show that $\mathfrak{M}$ carries the Hausdorff topology. Let $I_{1}$ and $I_{2}$ be distinct hyperbolic complex structures on $X$. 
\begin{claim}\label{claim}
There exist universal deformations $\mathcal{X}\rightarrow (S,s_0)$ with $I_{1}$ and $\mathcal{Y}\rightarrow (T,t_0)$ with $I_2$ satisfying
conditions i) and ii) in Step \ref{step1}, such that for all $s\in S$ and $t\in T$, the fibers $X_{s}$ and $Y_{t}$ are not isomorphic.
\end{claim}
Assume that Claim \ref{claim} holds, and then the images of $S$ and $T$ under the canonical maps $S\rightarrow \mathfrak{M}$ and $T\rightarrow \mathfrak{M}$ are disjoint neighborhoods in $\mathfrak{M}$ containing the isomorphism classes of $I_1$ and $I_2$, respectively. This confirms $\mathfrak{M}$ is Hausdorff. 

We now prove Claim \ref{claim}. Since $I_{1}$ and $I_{2}$ are hyperbolic, universal deformations $\mathcal{X}\rightarrow (S,s_{0})$ with $I_1$ and $\mathcal{Y}\rightarrow (T,t_0)$ with $I_2$ exist and satisfy conditions i) and ii) in Step \ref{step1}. By \cite[(3.11.9) Corollary]{Ko98}, the infinitesimal Kobayashi metrics $F_{X_{s}}$ and $F_{Y_{t}}$ are continuous on $\bigcup_{s\in S}TX_{s}$ and $\bigcup_{t\in T}TY_{t}$, respectively. Thus, \cite [THEOREM 5]{Wr77} implies that the Kobayashi distances $d_{X_{s}}$ and $d_{Y_{t}}$ vary continuously. Consequently, there exist neighborhoods $U_{s_0}\subseteq S$ of $s_0$, $U_{t_0}\subseteq T$ of $t_0$, and a constant $C>0$ such that for all $s\in U_{s_0},\ t\in U_{t_0}$, and all points $x_{1},\ x_{2}\in X_{s_0},\ y_{1},\ y_{2}\in Y_{t_0},$ the inequalities hold:
\begin{equation*}\label{equ}
\begin{aligned}
\frac{1}{C}d_{X_s}(x_{1},x_{2})\leq d_{X_{s_0}}(x_{1},x_{2})\leq Cd_{X_s}(x_{1},x_{2}),\\
\frac{1}{C}d_{Y_{t}}(y_{1},y_{2})\leq d_{Y_{t_0}}(y_{1},y_{2})\leq Cd_{Y_t}(y_{1},y_{2}).
\end{aligned}
\end{equation*}

We argue by contradiction. Suppose that some $X_s$ and $Y_t$ are isomorphic. Then there exist sequences $s_n\rightarrow s_0$ in $U_{s_0}$, $t_n\rightarrow t_0$ in $U_{t_0}$, and holomorphic isomorphisms $\varphi_{n}:X_{s_n}\rightarrow Y_{t_n}$. For all $n$ and all $x_{1},\ x_{2}\in X_{s_{n}}$, we have
$$d_{Y_{t_n}}(\varphi_{n}(x_{1}),\varphi_{n}(x_{2}))\leq Cd_{Y_{t_0}}(\varphi_{n}(x_{1}),\varphi_{n}(x_{2}))\leq Cd_{X_{s_0}}(x_{1},x_{2})\leq C^{2}d_{X_{s_n}}(x_{1},x_{2}),$$
where the second inequality holds because the Kobayashi distance decreases under holomorphic maps.
Thus, the sequence $\{\varphi_{n}:(X_{s_n},d_{X_{s_n}})\rightarrow (Y_{t_{n}},d_{Y_{t_{n}}})\}$ is equicontinuous. Since $X$ is compact, by the Ascoli--Arzel\`a theorem, $\{\varphi_{n}\}$ has a subsequence $\{\varphi_{n_{k}}\}$ that converges uniformly to a homeomorphism $\varphi:X_{s_{0}}\rightarrow Y_{t_{0}}$. Since both $\mathcal{X}$ and $\mathcal{Y}$ are locally biholomorphic to products, $\varphi$ is holomorphic according to Montel's theorem, contradicting the assumption that $I_{1}$ and $I_2$ are nonisomorphic. Therefore, no such isomorphisms $\varphi_{n}$ exist. After shrinking $S$ to $U_{s_0}$ and $T$ to $U_{t_0}$, the families satisfy Claim \ref{claim}.

\begin{step}
The complex space $\mathfrak{M}$ is a coarse moduli space for the fibered category $\mathcal{P}$.
\end{step}
To verify this, we show that $\mathfrak{M}$ satisfies:

\noindent($\star$) Bijectivity on points:
$$\mathcal{P}(\text{\{pt\}})=\{\text{isomorphism classes of compact hyperbolic complex manifolds}\}$$
corresponds bijectively to the set of closed points $\mathfrak{M}(\text{\{pt\}})$ of $\mathfrak{M}$,  yielding
$$\mathcal{P}(\operatorname{\{pt\}})\cong \operatorname{Hom}_{\textbf{An}}(\operatorname{\{pt\}},\mathfrak{M})=\mathfrak{M}(\operatorname{\{pt\}}).$$

\noindent($\star\star$) Universal property: From the proof of {\cite[Theorem 2]{Fu84}}, universal deformations induce a natural transformation
$$\alpha: \mathcal{P}\longrightarrow\operatorname{Hom}_{\textbf{An}}(-,\mathfrak{M}).$$
Moreover, for any $\mathfrak{N}\in \text{Ob}{\textbf{An}}$ and any natural transformation
$$\eta:\mathcal{P}\longrightarrow\operatorname{Hom}_{\textbf{An}}(-,\mathfrak{N}),$$
there exists a unique natural transformation
$$\beta: \operatorname{Hom}_{\textbf{An}}(-,\mathfrak{M})\longrightarrow\operatorname{Hom}_{\textbf{An}}(-,\mathfrak{N})$$
such that
$$\beta\circ \alpha=\eta.$$ 
Thus, $\mathfrak{M}$ is a complex space and the coarse moduli space for $\mathcal{P}$.
\end{proof}

\begin{cor}
\label{analytic}
For a smooth family of compact hyperbolic complex manifolds over an open disk $\Delta$, denote by $E\subseteq\Delta$ the set of points whose fibers are biholomorphic to a fixed compact hyperbolic manifold $H$. Then $E$ is either at most a discrete subset of $\Delta$ or the whole $\Delta$.
\end{cor}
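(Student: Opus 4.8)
The plan is to exploit the holomorphic moduli morphism together with the Hausdorff property of $\mathfrak{M}$ established in Proposition \ref{sep}. Write $f:\mathcal{X}\to\Delta$ for the given smooth family; by hypothesis every fiber $X_t$ is hyperbolic, so by Proposition \ref{sep} (together with the discussion in Section \ref{2.2}) the family induces a holomorphic morphism $\tilde f:\Delta\to\mathfrak{M}$ sending $t$ to the isomorphism class $[X_t]$. Then $E=\tilde f^{-1}([H])$ by the bijectivity-on-points property $(\star)$: $X_t\cong H$ exactly when $[X_t]=[H]$ in $\mathfrak{M}$.

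Next I would argue that $\{[H]\}$ is a closed analytic subset of $\mathfrak{M}$, and more precisely that locally near $[H]$ it is cut out by holomorphic functions. From Step \ref{step1}, a neighborhood of $[H]$ in $\mathfrak{M}$ is of the form $S/\mathrm{Aut}(I_0)$ where $S$ is the base of a universal deformation of $H$ with all fibers hyperbolic; the point $[H]$ corresponds to the image of the orbit of $s_0$. Since $\mathrm{Aut}(I_0)$ is finite, the single point $[s_0]$ is a closed (reduced) analytic subspace of the complex space $S/\mathrm{Aut}(I_0)$: indeed the orbit $\mathrm{Aut}(I_0)\cdot s_0$ is a finite set, hence an analytic subset of $S$, and its image in the quotient is analytic by Cartan's theorem \cite[Th\'eor\`eme 4]{Ca57}. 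Pulling back via $\tilde f$, the set $E\cap\tilde f^{-1}(S/\mathrm{Aut}(I_0))$ is a closed analytic subset of an open subset of $\Delta$, hence locally the zero set of a holomorphic function; the Hausdorff property from Step \ref{step3} ensures these local pictures are consistent and that $E$ is closed in all of $\Delta$.

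Now the dichotomy follows from the identity theorem on the one-dimensional disk. If $E$ has an accumulation point $t_*\in\Delta$, choose a connected open neighborhood $U\subseteq\Delta$ of $t_*$ on which $E\cap U$ is the zero set of a holomorphic function $h:U\to\mathbb{C}$ (from the previous paragraph, working in the chart $S/\mathrm{Aut}(I_0)$ around $[H]=\tilde f(t_*)$, and possibly after a further finite branched description of the quotient). Since the zeros of $h$ accumulate at $t_*\in U$, the identity theorem forces $h\equiv0$ on $U$, i.e.\ $U\subseteq E$. Thus $E$ is open. Being also closed and nonempty, and $\Delta$ being connected, $E=\Delta$. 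If $E$ has no accumulation point in $\Delta$ it is discrete. This is exactly the claimed alternative.

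The main obstacle is the second paragraph: making precise the sense in which $[H]$ is an analytic subset of $\mathfrak{M}$ near its own class and transferring this to a genuine local defining equation for $E$ in $\Delta$. The subtlety is that $\mathfrak{M}$ is only a coarse moduli space and its local model $S/\mathrm{Aut}(I_0)$ is a quotient singularity; one must check that the composite $\Delta\xrightarrow{\tilde f}\mathfrak{M}$, lifted or not through the quotient, still pulls the point $[H]$ back to an honest analytic (not merely closed) subset. One clean way around lifting issues is to use instead the representable and proper $\mathbf{Isom}$ functor from condition b) in Section \ref{2.2}: form $\mathcal{Y}=\Delta\times H$ as the constant family, let $\tau:T\to\Delta$ represent $\mathbf{Isom}_\Delta(\mathcal{X},\mathcal{Y})$, which is proper, and observe $E=\tau(T)$ is exactly the image of a proper morphism, hence a closed analytic subset of $\Delta$ by Remmert's proper mapping theorem; then either $E$ is discrete or it contains a nonempty open subset of the irreducible curve $\Delta$, and in the latter case $E=\Delta$. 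Either route reduces the statement to the identity/dimension principle on a one-dimensional base once analyticity of $E$ is secured.
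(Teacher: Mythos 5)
Your main argument is essentially the paper's own: both pass to the holomorphic classifying map $\Delta\to\mathfrak{M}$ furnished by Proposition \ref{sep}, observe that $E$ is the preimage of the single point $[H]$, and conclude by the identity principle on the one-dimensional connected base. Where you go beyond the paper is in justifying the step the paper leaves implicit: the paper only invokes the Hausdorff property to say $[H]$ is \emph{closed}, whereas the identity-theorem conclusion really needs $\{[H]\}$ to be a (local) \emph{analytic} subset of $\mathfrak{M}$ so that $E$ is analytic in $\Delta$; you supply this via the chart $S/\mathrm{Aut}(I_0)$ from Step \ref{step1} (in fact any point of any complex space is analytic, so even less is needed). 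One small wrinkle: after applying the identity theorem at the single accumulation point $t_*$ you assert ``thus $E$ is open,'' which does not yet follow for all points of $E$; the clean statement is that a closed analytic subset of the connected one-dimensional $\Delta$ is either discrete or all of $\Delta$, which finishes the argument and is what you effectively use. Your closing alternative --- representing $\mathbf{Isom}_\Delta(\mathcal{X},\Delta\times H)$ by a proper morphism $\tau:T\to\Delta$ and invoking Remmert's proper mapping theorem to get $E=\tau(T)$ analytic --- is a genuinely different route that bypasses the coarse moduli space and its quotient-singularity subtleties entirely, at the cost of relying on the representability and properness of the $\mathbf{Isom}$ functor from Section \ref{2.2}; it is arguably the more robust way to secure analyticity of $E$.
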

\begin{proof}
Proposition \ref{sep} shows that for the coarse moduli space $\mathfrak{M}$ of compact hyperbolic complex manifolds, there exists a holomorphic morphism $\sigma:\Delta\rightarrow\mathfrak{M}$ sending each point $t\in\Delta$ to the corresponding isomorphism class $[X_t]$ of the hyperbolic manifold $X_{t}$ in $\mathfrak{M}$. The image $\sigma(E)$ consists of a single point $[H]$. By the Hausdorff property of $\mathfrak{M}$ according to Proposition \ref{sep}, the point $[H]$ is a closed point in $\mathfrak{M}$. Since $\sigma:\Delta\rightarrow\mathfrak{M}$ is holomorphic, if its preimage $\sigma^{-1}([H])$ is a subset of $\Delta$ with accumulation points in $\Delta$,  it follows that $\sigma^{-1}([H])$ must be the whole set $\Delta$. Thus, $X_{t}\cong H$ for all $t\in \Delta$.
\end{proof}

\begin{lemma}[{\cite[COROLLARY 4.1]{Wr77}}]\label{limit}
Let $\pi:\mathcal{X}\rightarrow\Delta$ be a deformation of a compact complex manifold $X_0$. 
Suppose that there exists a subset $E\subseteq\Delta$ with $0$ as a limit point such that $X_{t}\cong H$ for all $t\in E$, where $H$ is hyperbolic. Then $X_0$ is hyperbolic.
\end{lemma}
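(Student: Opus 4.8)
The plan is to run a normal‑families argument directly on the biholomorphisms relating $H$ to the nearby fibers, the only available a priori estimate coming from Brody's reparametrization lemma. \emph{Setup.} Since $0$ is a limit point of $E$, pick $t_i\in E\setminus\{0\}$ with $t_i\to 0$ and biholomorphisms $\psi_i\colon H\to X_{t_i}$. Fix a Hermitian metric on $\mathcal{X}$ and a local $C^\infty$ trivialization of $\pi$ near $X_0$, giving diffeomorphisms $\theta_i\colon X_0\to X_{t_i}$ (Ehresmann); then $\varphi_i:=\psi_i^{-1}\circ\theta_i\colon X_0\to H$ is holomorphic for the complex structure $J_i:=\theta_i^{*}J_{t_i}$ on the fixed smooth manifold underlying $X_0$, and $J_i\to J_0$ in $C^\infty$, where $J_0$ is the given complex structure of $X_0$. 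Note $\dim X_0=\dim H=:n$. By Brody's theorem (cf.\ \cite{Br78}) it suffices to show that $X_0$ admits no non-constant holomorphic image of $\mathbb{C}$; I will in fact prove $X_0\cong H$.

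\emph{Key estimate (the main obstacle).} The crux is the uniform bound
\[
\sup_i\ \sup_{x\in X_0}\ \|d\varphi_i(x)\|\ <\ +\infty,
\]
taken with respect to fixed background metrics on $X_0$ and $H$. It is essential that the \emph{targets} of the $\varphi_i$ all equal the \emph{fixed} hyperbolic manifold $H$: the differentials of the inverse maps $\varphi_i^{-1}$ cannot be bounded this way without already knowing $X_0$ hyperbolic, which is what we are after. Suppose the bound fails. Passing to a subsequence, choose $x_i\in X_0$ and $J_i$-complex unit directions at $x_i$ along which $\|d\varphi_i\|\to\infty$; slicing by $J_i$-holomorphic coordinate discs of a uniform radius $r>0$ (available since $J_i\to J_0$ in $C^\infty$ on the compact $X_0$) yields holomorphic maps $f_i\colon\Delta_r\to H$ with $|f_i'(0)|\to\infty$. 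Brody's reparametrization lemma then produces, after reparametrizing and extracting a further subsequence, a non-constant holomorphic map $\mathbb{C}\to H$ — impossible, since $H$ is compact hyperbolic. Hence the bound holds.

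\emph{Conclusion.} Granting the bound, $\{\varphi_i\}$ is a family of holomorphic maps from $X_0$ into the fixed compact $H$ with uniformly bounded differentials, hence equicontinuous; by Ascoli--Arzel\`a a subsequence converges uniformly to a continuous $\chi\colon X_0\to H$, and the standard interior estimates for the (uniformly elliptic, $J_i$-dependent) $\bar\partial$-systems, with $J_i\to J_0$ smoothly, upgrade this to $C^\infty$-convergence with $\chi$ holomorphic for $J_0$. Since $\chi$ is a $C^\infty$-limit of the diffeomorphisms $\varphi_i$, it has topological degree $\pm1$, so it is surjective and of maximal rank on a dense open set; being also a locally uniform limit of the injective maps $\varphi_i$, the Hurwitz-type dichotomy (a locally uniform limit of injective holomorphic maps between equidimensional complex manifolds is either injective or everywhere of non-maximal rank) forces $\chi$ to be injective. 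An injective holomorphic map between complex manifolds of the same dimension is an open local biholomorphism, so $\chi$ is bijective, hence a biholomorphism $X_0\cong H$, and $X_0$ is hyperbolic.

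This last paragraph is the analogue, in the case of a possibly non-hyperbolic central fiber, of the Ascoli--Arzel\`a argument in Step \ref{step3} of the proof of Proposition \ref{sep}; the entire role of the hypothesis ``$X_{t_i}\cong H$'' (rather than merely ``$X_{t_i}$ hyperbolic'') is to feed the a priori estimate above. I expect the two points requiring care to be: the uniform-radius $J_i$-holomorphic slicing together with the book-keeping in the Brody reparametrization when the ambient complex structure varies (routine once $J_i\to J_0$ in $C^\infty$), and the justification that a degree-$\pm1$ locally uniform limit of injective holomorphic maps is a biholomorphism.
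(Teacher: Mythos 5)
The paper offers no proof of this lemma: it is quoted verbatim from Wright \cite[COROLLARY 4.1]{Wr77}, whose argument proceeds by quite different means (continuity properties of the Kobayashi pseudometric in the family, applied to holomorphic discs), and concludes only that $X_0$ is hyperbolic. Your self-contained normal-families argument on the global biholomorphisms is sound and in fact yields the stronger conclusion $X_0\cong H$, which is consistent with Theorem \ref{main theorem}. Two remarks on the details. First, the key derivative bound does not require Brody reparametrization: since $H$ is compact hyperbolic one has $F_H\ge c\|\cdot\|$ for a fixed Hermitian norm (Definition \ref{2}), while the local holomorphic triviality of $\pi$ over $\Delta$ supplies holomorphic discs of uniform radius in every fiber $X_{t_i}$ through every point and direction, whence $F_{X_{t_i}}\le C_0\|\cdot\|$ uniformly in $i$; the distance-decreasing property of $\psi_i^{-1}$ then gives $\|d\psi_i^{-1}\|\le C_0/c$ immediately, and this also dispenses with the $J_i$-holomorphic slicing bookkeeping. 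Second, the step you rightly flag --- upgrading ``degree $\pm1$ limit of injective maps'' to ``biholomorphism'' --- is safest organized as follows: each $\varphi_i$, being an injective holomorphic map between equidimensional manifolds, has nowhere-vanishing Jacobian determinant in local $J_i$-holomorphic coordinates; the scalar several-variable Hurwitz theorem then forces $\det d\chi$ to be either nowhere zero or identically zero on the connected $X_0$, and the latter is excluded by Sard's theorem together with the surjectivity coming from $\deg\chi=\pm1$. Hence $\chi$ is an unramified proper surjection, i.e.\ a covering of $H$ of degree one, i.e.\ a biholomorphism. This route avoids the global injectivity form of the Hurwitz dichotomy, whose manifold version (source points lying in distant charts) is exactly where an unguarded argument could go wrong; note also that degree one alone would not suffice, since a degree-one holomorphic map can contract a subvariety. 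With that step secured, your proof is a valid and somewhat stronger alternative to the citation.
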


We now prove Theorem  \ref{main theorem},  restated in an equivalent form:
\begin{theo}
Let $\pi: \cX\rightarrow\Delta$ be a smooth family of compact complex manifolds and $H$ a compact hyperbolic complex manifold. Assume that there is a subset $E\subseteq\Delta$ with accumulation points in $\Delta$ such that $X_{t}\cong H$ for any $t\in E$. Then all fibers $X_{t}\cong H$ for $t\in\Delta$.
\end{theo}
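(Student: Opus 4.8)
The plan is to localize the problem around an accumulation point of $E$ and then invoke the two main tools already established: the openness of hyperbolicity (Lemma \ref{open'}) and the structure of the coarse moduli space $\mathfrak{M}$ (Proposition \ref{sep}), together with the limit lemma (Lemma \ref{limit}). First I would pick an accumulation point $t_{0}\in\Delta$ of $E$; by Lemma \ref{limit} applied to a disk around $t_{0}$ (after re-centering), the fiber $X_{t_{0}}$ is hyperbolic. Then, viewing the smooth family as a family of complex structures on the fixed underlying smooth manifold $M$ of a fiber, Lemma \ref{open'} gives an open neighborhood $U\subseteq\Delta$ of $t_{0}$ on which every fiber $X_{t}$ is hyperbolic. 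Thus $U$ satisfies the two crucial properties advertised in the introduction: all its fibers are hyperbolic, and it contains the accumulation point $t_{0}$ of $E$, so $U\cap E$ still has $t_{0}$ as an accumulation point.

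Next I would apply Corollary \ref{analytic} (equivalently, run the moduli argument directly) to the restricted family $\mathcal{X}|_{U}\to U$: there is a holomorphic morphism $\sigma_{U}:U\to\mathfrak{M}$ sending $t$ to $[X_{t}]$, the set $\sigma_{U}(U\cap E)$ is the single point $[H]$, and by the Hausdorff property of $\mathfrak{M}$ this point is closed, so $\sigma_{U}^{-1}([H])$ is an analytic subset of $U$. Since it contains $U\cap E$, which has an accumulation point in $U$, the identity theorem for holomorphic maps forces $\sigma_{U}^{-1}([H])=U$, i.e.\ $X_{t}\cong H$ for all $t\in U$. (Strictly, $U$ should be taken connected, which is harmless after shrinking.)

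Finally I would upgrade from ``$X_{t}\cong H$ on an open set'' to ``$X_{t}\cong H$ on all of $\Delta$'' by a connectedness/contradiction argument. Let $E' := \{t\in\Delta : X_{t}\cong H\}\supseteq E$. The argument above shows that every point of $E'$ that is an accumulation point of $E'$ in $\Delta$ has an open neighborhood contained in $E'$; in particular the interior $\mathrm{int}(E')$ is nonempty (it contains $U$). I claim $E'$ is both open and closed in $\Delta$. It is open: any $t\in E'$ lies in the closure of $\mathrm{int}(E')$—if not, $t$ would be isolated in $E'$, but then applying the already-proven local statement with $E$ replaced by a sequence in $E'$... here one must be slightly careful, so instead I argue via the boundary. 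Suppose $E'\neq\Delta$; since $\Delta$ is connected and $\mathrm{int}(E')\neq\emptyset$, the topological boundary $\partial(\mathrm{int}(E'))\cap\Delta$ is nonempty, and any point $t_{1}$ in it is an accumulation point of $\mathrm{int}(E')\subseteq E'\subseteq E'$, hence (reapplying Lemma \ref{limit} and Lemma \ref{open'} and Corollary \ref{analytic} around $t_{1}$ with $E$ replaced by $\mathrm{int}(E')$) there is an open neighborhood of $t_{1}$ on which all fibers are $\cong H$, contradicting $t_{1}\in\partial(\mathrm{int}(E'))$. Therefore $E'=\Delta$.

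The main obstacle I anticipate is the bootstrap in the last paragraph: Corollary \ref{analytic} is stated for a family whose fibers are \emph{all} assumed hyperbolic, so at each stage one must first produce, via Lemma \ref{open'}, an open set of hyperbolic fibers before invoking the moduli space—and Lemma \ref{open'} in turn needs a hyperbolic fiber to start from, supplied by Lemma \ref{limit}. Chaining these correctly while tracking which set plays the role of ``$E$'' (and ensuring the relevant accumulation points genuinely lie in $\Delta$, not on its boundary) is the delicate bookkeeping; the analytic-geometry inputs themselves are all already in hand.
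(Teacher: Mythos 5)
Your proposal is correct and follows essentially the same route as the paper: Lemma \ref{limit} and Lemma \ref{open'} produce an open neighborhood of an accumulation point of $E$ over which all fibers are hyperbolic, Corollary \ref{analytic} (the Hausdorff coarse moduli argument) forces all fibers there to be $\cong H$, and a connectedness argument propagates this to all of $\Delta$. The only cosmetic difference is the final step: the paper runs a supremum argument along a line segment from the open set to an arbitrary point of $\Delta$, while you argue via the boundary of the interior of the $H$-locus; these are the same clopen-type argument, and your bookkeeping (including wisely abandoning the a priori claim that the full $H$-locus is open) is sound.
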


\begin{proof}
Choose an accumulation point $t_0$ in the assumption. Then Lemma \ref{open'} gives rise to a small open disk $\Delta_{t_{0}}\subset\Delta$ centered at $t_0$, such that all fibers $X_t$ are hyperbolic for $t\in\Delta_{t_{0}}$. 
By Corollary \ref{analytic}, $X_t\cong H$ for all $t\in \Delta_{t_{0}}$.

Fix a point $q\in \Delta_{t_{0}}$, and let $p$ be an arbitrary point in $\Delta\setminus \Delta_{t_{0}}$. Let $$\gamma:[0,1]\rightarrow\Delta$$ be a line segment connecting $\gamma(0)=q$ and $\gamma(1)=p$. Now set
$$h:=\text{sup}\{u\in [0,1]:X_{\gamma(u)}\cong H\}.$$
Obviously, $h>0$ and actually $h=1$. In fact, if $h<1$, then Lemmata \ref{limit} and \ref{open'} give rise to a neighborhood $U_{\gamma(h)}\subseteq\Delta$ of $\gamma(h)$ such that all fibers over $U_{\gamma(h)}$ are hyperbolic and $U_{\gamma(h)}$ contains a line segment in $\gamma([0,h])$. 
By Corollary \ref{analytic}, the fibers $X_{t}\cong H$ for all $t\in{U_{\gamma(h)}}$. This contradicts the supremum property of $h$. This proves $h=1$.
By the same reasoning, one can also prove $X_{\gamma(1)}\cong H$, i.e., $X_{p}\cong H$.

In summary, we prove that all fibers over $\Delta$ are biholomorphic to $H$.
\end{proof}

\end{document}